\documentclass[12pt, reqno]{amsart}

\usepackage{amssymb,latexsym,amsmath,amsfonts}
\usepackage{mathrsfs}
\usepackage{graphicx}
\usepackage[usenames]{color}
\usepackage{hyperref, cleveref}
\usepackage{comment}
\usepackage{enumitem}
\usepackage[normalem]{ulem}

\definecolor{DPurple}{rgb}{0.46,0.2,0.69}

\numberwithin{equation}{section}

\allowdisplaybreaks

\theoremstyle{definition}
\newtheorem{definition}{Definition}[section]

\theoremstyle{remark}

 \theoremstyle{plain}

\newtheorem{theorem}{Theorem}[section]
\newtheorem{lemma}[theorem]{Lemma}

\theoremstyle{definition}



\newcommand{\bea}{\begin{eqnarray*}}
\newcommand{\eea}{\end{eqnarray*}}

\numberwithin{equation}{section}

\date{}

\begin{document}
\title[Two remarks on transcendental shift-like maps on $\mathbb{C}^N$]{Two remarks on transcendental shift-like maps on $\mathbb{C}^N$} 
\author{Ramanpreet Kaur}
\address{Department of Mathematics, Institute of Mathematics and Applications, Bhubaneswar-751029, India}
\email{ramanpreet.ima@iomaorissa.ac.in, preetmaan444@gmail.com}
\keywords{transcendental entire function, transcendental shift-like map, topological entropy, wandering domains}
\subjclass[2020]{30D05, 37F10}

\begin{abstract}
In \cite{Bedford}, the dynamics of a particular polynomial diffeomorphism of $\mathbb{C}^N$, called a polynomial shift-like map of type $\nu$, has been studied as a higher dimensional analog of H\'enon maps. In this note, we prove that the Julia set of their transcendental counterpart is non-empty. In addition, an example of a transcendental shift-like map with an escaping wandering domain has been provided which, in fact, showcases a contrast with the dynamics of a polynomial shift map.
\end{abstract}
\maketitle

\vspace{-0.7cm}
\section{Introduction}\label{1}
Bedford and Pambuccian, in their paper titled,\textit{``Dynamics of shift-like polynomial diffeomorphisms on $\mathbb{C}^N$",} initiated the study of the dynamics of a polynomial shift map of type $\nu$ on $\mathbb{C}^N,$ where $N\geq 2$ \cite{Bedford}. In the spirit of H\'enon maps, they constructed filtration for a polynomial shift-like map of type $\nu$ which are indeed their higher dimensional analog. Loosely speaking, for $R>0$ (large enough) they constructed the sets $V_R^+$ (a forward invariant set in which the orbit of every point tends to infinity in forward time), $V_R^-$ (a set in which the forward orbit of any point can remain in it for a finite positive time), and $V_R$ (a compact set in which forward orbit of every point is bounded).
In this article, we study the dynamics of their transcendental counterpart, that is, a transcendental shift-like  map of type $\nu$ on $\mathbb{C}^N,$ where $1\leq\nu\leq N-1.$ In particular, we prove that its Julia set is non-empty. This indicates that it is worth studying the dynamics of a transcendental shift-like map. To study the dynamics of a transcendental diffeomorphism in higher dimensions, one may pursue the following three approaches to understand its dynamics or to develop techniques to study its dynamics.
\begin{enumerate}
\item Extending techniques from the transcendental dynamics for a map in several variables. Specifically, to extend the techniques from the dynamics of a transcendental H\'enon map.
\item Attempt to investigate from the perspective of a polynomial shift-like map in the setting of a transcendental shift-like map. 
\item Extend the known results for a transcendental entire function in one complex variable to their higher dimensional analogs.
\end{enumerate}
As  mentioned earlier, our primary focus is to show that the dynamics of a transcendental shift-like map is not linear, that is, its Julia set is non-empty. To achieve this, we shall follow the approaches $(1)$ and $(3)$, which in particular, is an amalgamation of ideas from transcendental dynamics in one variable and dynamics of a biholomorphism in several complex variables. Further, it is to be mentioned that the dynamics of transcendental shift-like maps is in contrast with that of their polynomial counterpart, for instance, existence of filtration for a polynomial shift-like map makes it possible for us to conclude that neither an escaping nor an oscillating wandering domain exists for a polynomial shift-like map. However, non-existence of filtration in transcendental setting arises a natural question: Does there exist a transcendental shift-like map with wandering domain? In Section $4$, we provide an example of a transcendental shift-like map with an escaping wandering domain. Also, it is still an open question  whether bounded wandering domains  exist for polynomials or not. Therefore, it is possible that a transcendental shift-like map may have wandering domains. In this article, we provide an example of a transcendental shift-like map with an escaping wandering domain, and hence showing a contrast with that of its polynomial counterpart. In a subsequent paper, we shall explore other aspects of transcendental shift-like maps, such as existence of periodic points, classification of recurrent Fatou components, etc.

To obtain that the Julia set of a transcendental shift-like map is non-empty, it is sufficient to show that its  topological entropy is infinite. For this, we shall use the method which Markus Wendt used in  \cite{Wendt} to show that any transcendental entire function $f$ has an infinite topological entropy. In fact, he divided the proof into two cases: when the sequence $\{f_n\}$, given by $f_n(z)=\frac{f(nz)}{n}$, is quasi-normal and when it is not. In quasi-normal case, it was obtained that each $f_n$ is a polynomial-like map of arbitrarily large degree, and in non-quasinormal case, by using \emph{Ahlfors Five Islands Theorem}, Wendt worked with arbitrary large number of disks with pairwise disjoint closures such that each of these disks contained a univalent preimage of all but atmost $2$ disks. 
It is to be mentioned that a similar approach was later used by Arosio et al. in order to achieve that the topological entropy of a transcendental H\'enon map is infinite \cite{Arosio3}.

\section{Preliminaries}
A diffeomorphism $F:\mathbb{C}^N\to\mathbb{C}^N$, with $N\geq 2$ is said to be shift-like if the orbit of a point $z\in\mathbb{C}^N$ under $F$ determines a bi-infinite sequence $\{\zeta_j\}_{j\in\mathbb{Z}}$ such that
\[F^k(z)=(\zeta_{k+1},\dots,\zeta_{k+N})\in\mathbb{C}^N. \]
Now, given a shift-like map $F$, it is said to have type $\nu$ for some $1\leq\nu\leq N-1$, if $F$ has the following form. For every $(z_1,z_2\dots,z_N)\in\mathbb{C}^N,$ 
\[F(z_1,z_2,\dots,z_N)=(z_2,z_3,\dots, z_N, f(z_{N-\nu+1})-az_1),\]
where $a\in\mathbb{C}^*$ and $f$ is a holomorphic function.  If  $f$ is a polynomial of degree $d$, then $F$ is said to be a polynomial shift-like map of type $\nu$ (see \cite{Bedford}). We say  that $F$ is a transcendental shift-like map of type $\nu$ if $f$ is a transcendental entire function.

\begin{definition}[Fatou set and Julia set]  \cite[Page 859]{Arosio}
A point $z\in\mathbb{C}^N$ belongs to the $\widehat{\mathbb{C}^N}$-Fatou set of $F$ if the family of iterates $\{F^n\}$ is $\widehat{\mathbb{C}^N}$-normal near $z$. A point $z\in\mathbb{C}^N$ belongs to the $\mathbb{P}^N$-Fatou set of $F$ if the family of iterates $\{F^n\}$ is $\mathbb{P}^N$-normal near $z$.
\end{definition}

\begin{definition}\cite[Page 857]{Arosio}
Let $X$ be a complex manifold. A family $\mathcal{F}\subset$ Hol$(X,\mathbb{P}^N)$ is $\mathbb{P}^N$-normal if for every sequence $\{f_n\}\subset\mathcal{F}$,
there exists a subsequence $\{f_{n_k}\}$ converging uniformly on compact subsets to some $f\in$ Hol$(X,\mathbb{P}^N)$. \\
A family $\mathcal{F}\subset$ Hol$(X,\mathbb{C}^N)$ is $\mathbb{C}^N$-normal if for every sequence $\{f_n\}\subset\mathcal{F}$ which is not
divergent on compact subsets, there exists a subsequence $\{f_{n_k}\}$ converging uniformly on compact subsets to some $f\in$ Hol$(X,\mathbb{C}^N)$.
\end{definition}

\begin{definition}\cite[Page 209]{Schiff}
Let $D$ be a domain in $\mathbb{C}$. A family $\mathcal{F}$ of holomorphic functions defined on $D$ is said to be quasi-normal if for every sequence $\{f_n\}\subseteq \mathcal{F}$, there exists a finite set $Q\subset D$ and a subsequence $\{f_{n_k}\}$ of $\{f_n\}$ which converges uniformly on compact subsets of $D\setminus Q.$
\end{definition}

\begin{definition}\cite{Adler, Canvos}
Let $f: X\to X$ be a continuous map of a metric space $(X, d)$ and let $K$ be a forward invariant compact subset of $X$. For a given $\epsilon >0$ and $n\in\mathbb{N},$ we say that a set $S\subseteq K$ is $(n,\epsilon)-$separated if for any $x, y\in S$ with $x\not= y$, we have 
\[\max_{1\leq j\leq n-1}d(f^{j}(x), f^j(y))\geq \epsilon.\]
Let $s(n, \epsilon, f,K)$ denote the maximal cardinality of an $(n,\epsilon)-$separated set. Then the topological entropy of $f$ over $K$ is given by 
\[h(f,K)=\lim_{\epsilon\to 0}\limsup_{n\to\infty}\frac{\log s(n, \epsilon, f,K)}{n}.\]
And the topological entropy of $f$ is given by 
\[h(f)=\sup\{h(f,K): K\subseteq X \text{ is a forward invariant compact subset}\}.\]
Similarly, we can define the topological entropy in terms of covering sets as follows.
For a given $\epsilon >0$ and $n\in\mathbb{N},$ we say that a set $C\subseteq K$ is $(n,\epsilon)-$covering if for every $x\in K$, there exists a $y\in C$ such that
\[\max_{1\leq j\leq n-1}d(f^{j}(x), f^j(y))< \epsilon.\]
Let $c(n,\epsilon, f,K)$ denote the minimal cardinality of an $(n,\epsilon)-$covering set. Then the topological entropy of $f$ over $K$ is given by 
\[h(f,K)=\lim_{\epsilon\to 0}\limsup_{n\to\infty}\frac{\log c(n, \epsilon, f,K)}{n}.\]
And the topological entropy of $f$ is given by 
\[h(f)=\sup\{h(f,K): K\subseteq X \text{ is a forward invariant compact subset}\}.\]
\end{definition}

Recall that a transcendental shift-like map of type $\nu$ is a diffeomorphism of $\mathbb{C}^N$, given by
\begin{equation}\label{3}
F(z_1,z_2,\dots,z_N)=(z_2,z_3,\dots, z_N, f(z_{N-\nu+1})-az_1),
\end{equation}
where $f$ is a transcendental entire function and $a\in\mathbb{C}^*.$ Consider a sequence of rescaled maps,  $f_n(z)=\frac{f(nz)}{n}$ for every $z\in\mathbb{C}.$ Notice that for every $n\geq 1$, $f$ and $f_n$ are topologically conjugate via the map $z\to nz$. This gives that for every $n\in\mathbb{N}$, the maps $F_n$ given by $F_n(z_1,z_2,\dots, z_N)=(z_2,z_3,\dots, f_n(z_{N-\nu+1})-az_1)$ and $F$ are topologically conjugate to each other via the map $(z_1,z_2,\dots, z_N)\mapsto (nz_1,nz_2,\dots,n z_N)$. Hence the maps $F_n$ and $F$ have the same topological entropy for every $n\in\mathbb{N}$. In this direction, we consider two cases: when the sequence $\{f_n\}$ is quasi-normal on $\mathbb{C}$ and when the sequence $\{f_n\}$ is not quasi-normal.
Now, let us recall the following lemmas which we shall use in the case when the sequence $\{f_n\}$ is quasi-normal \cite{Arosio3}.\\
\indent Let $\{f_{n_k}\}$ be  a subsequence of $\{f_n\}$ and $Q$ be a finite set such that $\{f_{n_k}\}$ converges uniformly on compact subsets of $\mathbb{C}\setminus Q.$ Also, for any $s\in (0, \infty)$, denote the Euclidean disk of radius $s$ centred at origin by $\mathbb{D}_s$.

\begin{lemma}\label{2.3}
The set $Q$ contains the origin, and there exists $s\in (0,1)$ such  that the sequence $\{f_{n_k}\}$  diverges uniformly to infinity  on compact subsets of $\mathbb{D}_s\setminus\{0\}.$
\end{lemma}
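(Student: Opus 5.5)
The plan is to exploit the growth of a transcendental entire function through the Cauchy estimates, together with the maximum principle. Since $\{f_{n_k}\}$ converges uniformly on compact subsets of $\mathbb{C}\setminus Q$, on each component of $\mathbb{C}\setminus Q$ (here this is all of $\mathbb{C}\setminus Q$, which is connected since $Q$ is finite) the limit is either a holomorphic function $g$ or the constant $\infty$. This dichotomy for locally uniform limits of holomorphic maps into $\widehat{\mathbb{C}}$ is the standard one. The whole argument consists in ruling out the holomorphic alternative by showing it would force $f$ to be a polynomial of degree at most one.

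\emph{Step 1: $0\in Q$.} Suppose not. Then there is $r>0$ with $\overline{\mathbb{D}_r}\cap Q=\emptyset$, and $f_{n_k}$ converges uniformly on $\overline{\mathbb{D}_r}$.
\begin{itemize}
\item The limit cannot be $\infty$, because $f_{n_k}(0)=f(0)/n_k\to 0$.
\item Hence the limit is holomorphic and bounded on $\overline{\mathbb{D}_r}$, so there is $M$ with $|f(n_k z)|\le Mn_k$ for all $|z|\le r$ and all large $k$. In other words, $\max_{|w|\le n_k r}|f(w)|\le Mn_k$.
\item The Cauchy estimates then give, for every $j\ge 2$, $|f^{(j)}(0)|\le j!\,Mn_k/(n_k r)^j\to 0$ as $k\to\infty$. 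So $f$ is a polynomial of degree at most $1$, contradicting transcendence.
\end{itemize}
Thus $0\in Q$.

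\emph{Step 2: divergence near the origin.} Since $Q$ is finite, choose $s\in(0,1)$ with $\overline{\mathbb{D}_s}\cap Q=\{0\}$. The punctured disk $\mathbb{D}_s\setminus\{0\}$ is connected and contained in $\mathbb{C}\setminus Q$, so on it $f_{n_k}$ converges locally uniformly either to a holomorphic $g$ or to $\infty$. Suppose the limit is holomorphic.
\begin{itemize}
\item On the compact circle $|z|=s/2$ we get $|f_{n_k}|\le M$ for large $k$.
\item Since each $f_{n_k}$ is entire, the maximum principle extends this bound to the closed disk $|z|\le s/2$. That is, $\max_{|w|\le n_k s/2}|f(w)|\le Mn_k$.
\item The same Cauchy estimate argument as in Step 1 (with $r=s/2$) again forces $f$ to be affine, a contradiction.
\end{itemize}
Alternatively, this uniform bound on a disk around $0$ gives, by Montel/Vitali, convergence on a neighbourhood of $0$, contradicting the fact that $0\in Q$ is genuinely needed; but the growth argument is self-contained. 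Hence the limit is $\infty$, i.e.\ $f_{n_k}\to\infty$ uniformly on compact subsets of $\mathbb{D}_s\setminus\{0\}$.

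The only delicate point I expect is the bookkeeping around the dichotomy: the statement ``converges uniformly on compact subsets of $\mathbb{C}\setminus Q$'' must be understood in the spherical sense, allowing the limit $\infty$. One also needs connectedness of $\mathbb{D}_s\setminus\{0\}$ to ensure that the alternative ``holomorphic versus $\equiv\infty$'' holds globally on the punctured disk rather than componentwise. Once this is in place, both steps reduce to the single growth estimate $M(n_k r,f)=O(n_k)$, which is incompatible with $f$ being transcendental.
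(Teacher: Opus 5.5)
The paper gives no proof of this lemma; it is quoted from \cite{Arosio3}, so there is no in-paper argument to compare against. Your proof is correct and complete. The ingredients are the right ones:
\begin{itemize}
\item the dichotomy (holomorphic limit or $\equiv\infty$) on the connected set $\mathbb{C}\setminus Q$, with the spherical reading of ``converges'';
\item the observation $f_{n_k}(0)=f(0)/n_k\to 0$, which excludes the limit $\infty$ near $0$;
\item the Cauchy estimate $|f^{(j)}(0)|\le j!\,M\,r^{-j}n_k^{1-j}\to 0$ for $j\ge 2$, which forces $f$ to be affine;
\item the maximum principle, which transfers the bound from the circle $|z|=s/2$ to the whole disk.
\end{itemize}
Equivalently, you could phrase the growth step as boundedness of $f_{n_k}^{(j)}(0)=n_k^{j-1}f^{(j)}(0)$; it is the same estimate.

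Two small remarks.

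First, since $\mathbb{C}\setminus Q$ is connected and your Step 2 excludes a holomorphic limit, you have in fact shown that $f_{n_k}\to\infty$ locally uniformly on all of $\mathbb{C}\setminus Q$. This is stronger than what the lemma asserts.

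Second, in Step 2 you chose to re-run the growth estimate rather than argue via Montel. That is the better choice here. In the paper, $Q$ is only \emph{some} finite set off which $\{f_{n_k}\}$ converges, not necessarily the minimal set of irregular points. So ``$0\in Q$'' by itself does not rule out convergence near $0$, and your parenthetical Montel alternative is vague as written. It becomes valid once you note that the Step 1 argument applies verbatim to every further subsequence, so no subsequence can converge uniformly near $0$.
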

\begin{lemma}\label{2.6}
Let $s, \{f_{n_k}\}$ be as in Lemma \ref{2.3}. Further, if $r\in (0,s), R>0$, and $m\in\mathbb{N}$, then there exists $k_0\geq 1$ such that for any $k>k_0$, the following holds.
\begin{enumerate}
\item $|f_{n_k}(z)|>R$ for every $z\in\partial\mathbb{D}_r.$
\item The winding number of the curve $f_{n_k}(\partial\mathbb{D}_r)$ around the origin is larger than or equal to $m$.
\end{enumerate}
\end{lemma}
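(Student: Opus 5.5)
The plan is to establish both assertions of Lemma \ref{2.6} from the uniform divergence provided by Lemma \ref{2.3}, together with an argument-principle count for the winding number. Fix $r\in(0,s)$, $R>0$ and $m\in\mathbb{N}$.

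For part (1), note that $\partial\mathbb{D}_r$ is a compact subset of $\mathbb{D}_s\setminus\{0\}$. By Lemma \ref{2.3}, $f_{n_k}\to\infty$ uniformly on $\partial\mathbb{D}_r$. Hence there is $k_1$ such that $|f_{n_k}(z)|>R$ for all $z\in\partial\mathbb{D}_r$ and all $k>k_1$. In particular $f_{n_k}$ has no zeros on $\partial\mathbb{D}_r$, so the winding number in part (2) is well defined.

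For part (2), I would use the argument principle. Since $f_{n_k}$ is entire, the winding number of $f_{n_k}(\partial\mathbb{D}_r)$ around the origin equals the number of zeros of $f_{n_k}$ in $\mathbb{D}_r$, counted with multiplicity. Because $f_{n_k}(z)=f(n_kz)/n_k$, this is the number of zeros of $f$ in $\mathbb{D}_{n_kr}$. So it suffices to show that this count tends to infinity as $k\to\infty$. I would split into two cases.

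\emph{Case A: $f$ has infinitely many zeros.} Then the number of zeros of $f$ in $\mathbb{D}_{n_kr}$ tends to infinity because $n_kr\to\infty$, and we are done.

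\emph{Case B: $f$ has finitely many zeros.} Then $f=Pe^{g}$ with $P$ a polynomial and $g$ entire and nonconstant, since $f$ is transcendental. Here the zero count stays bounded, so I expect this to be the main obstacle; it has to be ruled out using the hypothesis. The first idea is that, since $0\notin f_{n_k}(\mathbb{D}_r)$ except at finitely many points, one could pick a value $w$ with $|w|$ large and count solutions of $f_{n_k}=w$ instead. For $|w|<R$, the winding number about $0$ equals the winding number about $w$, because $f_{n_k}(\partial\mathbb{D}_r)$ avoids $\mathbb{D}_R$. This does not help directly, since it equals the same bounded number. So the real work is a contradiction argument: the hypothesis from Lemma \ref{2.3} forces $|f_{n_k}|$ to be huge on the annulus $\{r'\le|z|\le r\}$ for some $r'<r$, while the zero count $M$ stays bounded. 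Write $f_{n_k}=z^{M}h_k$ up to bounded polynomial factors, with $h_k$ zero-free in $\mathbb{D}_r$. By the minimum principle, $|h_k|$ is large on all of $\mathbb{D}_r$, so $|f_{n_k}|\ge c|z|^{M}\cdot(\text{large})$ there. Combining this with $\log|f_{n_k}|$ being harmonic off zeros, I would try to contradict the fact that $Q\ni 0$ and that $f_{n_k}(0)=f(0)/n_k\to 0$. Indeed, if $f(0)\neq0$ then $h_k(0)$ is small while $h_k$ is large on $\partial\mathbb{D}_r$. For a zero-free $h_k$, the harmonic function $\log|h_k|$ then has a very negative value at the centre and a very positive value on the boundary circle, and Harnack or Hadamard type growth estimates for $\log|f|$ at scale $n_k$ would be used to make this incompatible with a bounded number of zeros. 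This last step, making the harmonic-function contradiction rigorous uniformly in $k$, is where I expect the difficulty. Once it is done, I would take $k_0$ as the maximum of $k_1$ and the index beyond which the zero count is at least $m$.
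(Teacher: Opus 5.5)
Your part (1), the reduction of part (2) to counting zeros of $f$ in $\mathbb{D}_{n_k r}$, and Case A are correct. The paper itself gives no proof here; it quotes the lemma from \cite{Arosio3}. The genuine gap is Case B: you leave it open, and the mechanism you sketch does not work.

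Write $f(\zeta)=\prod_{j=1}^{d}(\zeta-a_j)\,e^{g(\zeta)}$. Then $f_{n_k}(z)=n_k^{d-1}\prod_{j}(z-a_j/n_k)\,e^{g(n_k z)}$, so the zero-free factor is $h_k(z)=n_k^{d-1}e^{g(n_k z)}$ and $h_k(0)=n_k^{d-1}e^{g(0)}$. The smallness of $f_{n_k}(0)=f(0)/n_k$ is absorbed entirely by the zeros $a_j/n_k$ collapsing onto $0$. Thus $h_k(0)$ is small only when $d=0$, and it tends to infinity when $d\ge 2$. The minimum principle therefore gives a contradiction only for $d\le 1$.

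For $d\ge 2$ you would need a genuinely quantitative argument. For instance, the harmonic functions $\mathrm{Re}\,g(n_k z)+(d-1)\log n_k$ tend to $+\infty$ uniformly on $\mathbb{D}_r$. Harnack's inequality then gives $|\mathrm{Re}\,g|=O(\log n_k)$ on $\mathbb{D}_{n_k r/2}$, and Borel--Carath\'eodory plus Cauchy estimates force $g$ to be constant. Your sketch gestures at this but does not carry it out.

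There is a much simpler fix, and you already had it in hand. You observed that, since $f_{n_k}(\partial\mathbb{D}_r)$ avoids $\mathbb{D}_R$, the winding number about $0$ equals the winding number about any $c\in\mathbb{D}_R$. You dismissed this because a fixed $c$ gives the same count; the point is to let $c$ depend on $k$.

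Since $f$ is transcendental, Picard's great theorem gives a value $w_0$ that $f$ assumes infinitely often. For $k$ large, $|w_0|/n_k<R$, so the winding number about $0$ equals the winding number about $w_0/n_k$. By the argument principle, this is the number of solutions of $f_{n_k}(z)=w_0/n_k$ in $\mathbb{D}_r$. That is exactly the number of solutions of $f(\zeta)=w_0$ in $\mathbb{D}_{n_k r}$, which tends to infinity.

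This removes the case distinction entirely. It also shows that part (2) uses Lemma \ref{2.3} only through part (1), namely to guarantee that the curve stays outside $\mathbb{D}_R$.
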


Prior to recalling results in the direction of non-quasinormal case, let us first introduce some notations.
\begin{itemize}
\item[(i)] Denote by $\{f_{n_h}\}$ a subsequence of $\{f_n\}$ and $Q_1=\{x_j: j\in\mathbb{N}\}$ an infinite set such that no subsequence of $\{f_{n_h}\}$ converges uniformly in any neighbourhood of any $x_j$ \cite[Proposition 2.9]{Arosio3}. 
\item[(ii)] Fix $k\geq 1$, and let $R$ be a positive number such that $\overline{\mathbb{D}_R(x_i)}\cap \overline{\mathbb{D}_R(x_j)}=\emptyset$ for every $1\leq i\not=j\leq k.$
\item[(iii)] Choose $r\in (0,R)$ such that $|a|r<R-r.$
\item[(iv)] For any $n_h\in\mathbb{N}$ and for any $i,l\in\{1,2,\dots,k\}$, denote
\begin{align*}\label{iv}
J(i,l)=&\{j\in\{1,2,\dots,k\}: \mathbb{D}_R(x_j+ax_l)\text{ admits a biholomorphic preimage}\\
&\text{ under }f_{n_h} \text{ in }
 \mathbb{D}_r(x_i)\}.
\end{align*} 
\end{itemize}

For the non-quasinormal case, we shall make use of the following lemma.
\begin{lemma}\cite[Lemma 3.10]{Arosio3}\label{3.1}
There exists $n_h$ such that the cardinality of the set $J(i,l)$ is atleast $k-2$ for every $i,l\in\{1,2,\dots,k\}.$
\end{lemma}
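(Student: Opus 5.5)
We prove Lemma \ref{3.1} through the Ahlfors Five Islands Theorem, applied to the non-normal family $\{f_{n_h}\}$ near each of the finitely many points $x_1,\dots,x_k$. Recall the form of that theorem we use. Let $D$ be a domain and $\mathcal{F}$ a family of meromorphic functions on $D$ that is not normal at some point of $D$. Let $D_1,\dots,D_5$ be Jordan domains on the sphere with pairwise disjoint closures. Then some $g\in\mathcal{F}$ has a univalent (biholomorphic) preimage of some $D_j$ compactly contained in $D$. Iterating this statement gives the stronger version we need: among any $q\geq 5$ such target domains, all but at most $4$ (and, with the sharper counting used by Wendt, all but at most $2$ for entire functions, since $\infty$ is omitted) have univalent preimages for a single member $g$ taken from any subsequence of the non-normal family.

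\textbf{Step 1: fix the targets.} Fix $i,l\in\{1,\dots,k\}$ and take the $k$ target disks $\mathbb{D}_R(x_j+ax_l)$, $j=1,\dots,k$. These are translates by $ax_l$ of the disks $\overline{\mathbb{D}_R(x_j)}$, which are pairwise disjoint by (ii). Hence the targets also have pairwise disjoint closures.

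\textbf{Step 2: one pair $(i,l)$.} By (i), no subsequence of $\{f_{n_h}\}$ converges uniformly near $x_i$. So $\{f_{n_h}\}$ restricted to $\mathbb{D}_r(x_i)$ is not normal, and every subsequence is again not normal there. Apply the many-islands form of Ahlfors' theorem (Wendt's version for entire functions) on $D=\mathbb{D}_r(x_i)$ with the $k$ targets. It yields infinitely many indices $n_h$ for which at least $k-2$ of the disks $\mathbb{D}_R(x_j+ax_l)$ have a univalent preimage in $\mathbb{D}_r(x_i)$. In other words, the set of good indices for $(i,l)$ contains a subsequence of every subsequence.

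\textbf{Step 3: all pairs at once.} There are only $k^2$ pairs, so we do a finite diagonal refinement. Start with $\{f_{n_h}\}$ and go through the pairs in turn. For each pair, pass to the infinite subsequence produced in Step 2, which is possible because non-normality at $x_i$ persists under passage to subsequences. After $k^2$ steps we might worry about simultaneity, since the conclusion of Step 2 holds only for infinitely many indices, not eventually.

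The fix is to first show that for each pair the bad indices are finite. Suppose instead that infinitely many $n_h$ are bad for $(i,l)$, that is, at most $k-3$ targets have univalent preimages for each of them. This gives a subsequence along which at least $3$ targets lack univalent preimages in $\mathbb{D}_r(x_i)$. Passing to a further subsequence, the same $3$ targets lack them along the whole subsequence. This contradicts the Ahlfors three-islands version for entire functions applied to this still non-normal subsequence: three disjoint islands force some member to have a univalent island over one of them. So for each pair only finitely many indices are bad. Taking any $n_h$ beyond all $k^2$ thresholds gives $|J(i,l)|\geq k-2$ for every $i,l$.

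The main obstacle is justifying the islands theorem in the entire-function form with $3$ islands: three disjoint finite Jordan domains, with $\infty$ serving as a totally ramified value. This is a standard consequence of the five-islands theorem for families of holomorphic functions, as used in Wendt's work and in \cite{Arosio3}.
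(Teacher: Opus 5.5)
Your proof is correct and follows the standard argument behind the cited result (Lemma 3.10 of Arosio et al., which the paper quotes without proof). If some pair $(i,l)$ had infinitely many bad indices, pigeonholing over the finitely many triples of targets gives a subsequence along which three fixed disks $\mathbb{D}_R(x_j+ax_l)$, with pairwise disjoint closures, have no univalent preimage in $\mathbb{D}_r(x_i)$. The three-islands theorem for holomorphic families then makes this subsequence normal near $x_i$, contradicting (i), and there are only $k^2$ pairs. Your Step 2 and the opening ``iterating'' remark are unnecessary and only loosely justified, since the contradiction argument in Step 3 already gives $|J(i,l)|\geq k-2$ for all pairs and all sufficiently large $h$.
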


\section{Non-emptiness of the Julia set }
As we mentioned in Section \ref{1}, to show that a transcendental shift-like map of type $\nu$ has non-empty Julia set, it is enough to show that its topological entropy is infinite. In this section, we prove the following.
\begin{theorem}
If $F$ is a transcendental shift-like map of type $\nu$ on $\mathbb{C}^N$, then its topological entropy is infinite.
\end{theorem}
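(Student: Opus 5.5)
Since $F$ and $F_n$ are topologically conjugate for every $n$, it suffices to show that for every $m\in\mathbb{N}$ some $F_{n}$ has topological entropy at least $\log m$ (or $\log(k-2)$ with $k$ arbitrary). The strategy is to find, for some $n$, a compact set $K\subset\mathbb{C}^N$ and $m$ pairwise disjoint compact pieces $K_1,\dots,K_m\subset K$ such that the images under $F_n$ realize a full shift on $m$ symbols; that is, every itinerary is realized by a point whose orbit stays in $K$. The point set $\Lambda$ of all such orbits is then compact and forward invariant. Points with distinct itineraries of length $n$ are $(n,\epsilon)$-separated, where $\epsilon$ is the minimal distance between the $K_j$. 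This gives $s(n,\epsilon,F_n,\Lambda)\geq m^n$, hence $h\geq\log m$. Since the itinerary depends only on the coordinate that is updated, I will work with products of planar disks, with coordinate $N-\nu+1$ feeding $f_n$ and coordinate $1$ entering linearly. I split into the quasi-normal and non-quasinormal cases as indicated.

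\emph{Quasi-normal case.} Take $s$ and $\{f_{n_k}\}$ from Lemma~\ref{2.3}, fix $r\in(0,s)$, choose $R$ large (say $R>(1+|a|)r\cdot 10$), and apply Lemma~\ref{2.6} with winding number $\geq m$. By the argument principle, for large $k$ each $w$ with $|w|\le 2R$ has at least $m$ preimages under $f_{n_k}$ in $\mathbb{D}_r$ (counted with multiplicity). The map $f_{n_k}:U\to \mathbb{D}_{R}$, with $U$ the component of the preimage, is a polynomial-like map of degree $\ge m$. Consider the polydisk $P=\mathbb{D}_r^N$. A point stays in $P$ under one step exactly when the new last coordinate $f_{n_k}(z_{N-\nu+1})-az_1$ lies in $\mathbb{D}_r$. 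Since $|az_1|<|a|r\ll R$, the target disk is shifted only slightly, and $f_{n_k}$ restricted to preimages is still a proper map of degree $\ge m$ onto a disk containing $\mathbb{D}_r$. Here I would follow the Arosio et al.\ argument for H\'enon maps: the set $K=\bigcap_{j\in\mathbb{Z}}F_{n_k}^j(\overline{P})$ is compact and invariant, and a degree count (as in Wendt, via counting periodic points or via $(n,\epsilon)$-separated sets built from the preimage tree of the scalar sequence $\zeta_j$) gives at least $m^n$ separated orbits. Because the dynamics is a shift, prescribing the bi-infinite scalar sequence $\zeta_j\in\mathbb{D}_r$ subject to $\zeta_{j+N}=f(\zeta_{j+N-\nu+1}) -a\zeta_{j+1}$ amounts to solving successively for $\zeta_{j+N-\nu+1}$ among $\ge m$ choices. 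The key observation is that this backward solving uses the map $f$ applied at index $j+N-\nu+1$ and is consistent as a nested-compact-set construction.

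\emph{Non-quasinormal case.} Use Lemma~\ref{3.1}: for the chosen $n_h$, each $\mathbb{D}_r(x_i)$ contains univalent preimages under $f_{n_h}$ of $\mathbb{D}_R(x_j+ax_l)$ for at least $k-2$ indices $j$. Consider the compact sets $B_{i_1\dots i_N}=\overline{\mathbb{D}_r(x_{i_1})}\times\dots\times\overline{\mathbb{D}_r(x_{i_N})}$. If $z\in B_{i_1\dots i_N}$, then $az_1\in\mathbb{D}_{|a|r}(ax_{i_1})$. The condition $|a|r<R-r$ guarantees that $\mathbb{D}_r(x_j)+az_1\subset\mathbb{D}_R(x_j+ax_{i_1})$. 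Hence for each admissible $j\in J(i_{N-\nu+1},i_1)$ there is a nonempty closed subset of $B$, a graph over the other coordinates given by the univalent inverse branch in $z_{N-\nu+1}$, that is mapped by $F_{n_h}$ into $B_{i_2\dots i_N j}$. Iterating this construction and using nested compactness, every sequence $(i_t)$ with $i_{t+N}\in J(i_{t+N-\nu+1},i_{t+1})$ is realized by an orbit. There are at least $(k-2)^n$ such words of length $n$, and since the disks $\overline{\mathbb{D}_R(x_j)}$ are disjoint, these orbits are $(n,\epsilon)$-separated. Thus $h(F)\ge\log(k-2)$ for every $k$, so $h(F)=\infty$.

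The main obstacle is the quasi-normal case. There the preimages need not be univalent, and the coupling term $-az_1$ must be absorbed. I would handle this by taking $R$ much larger than $(1+|a|)r$ so that all perturbed targets contain $\mathbb{D}_r$. I would then count separated orbits through nested preimage components, using the winding number lower bound at each step to guarantee at least $m$ distinct components, as in \cite{Arosio3}.
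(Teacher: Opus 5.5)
\textbf{Gap: the quasi-normal case.} Lemma~\ref{2.6} gives at least $m$ preimages in $\mathbb{D}_r$ \emph{counted with multiplicity}. It does not give $m$ distinct components, and your plan to ``count separated orbits through nested preimage components'' needs distinct components.

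In fact the component count fails. By Lemma~\ref{2.3}, $f_{n_k}\to\infty$ locally uniformly on $\mathbb{D}_s\setminus\{0\}$. So for large $k$ the set $f_{n_k}^{-1}(\mathbb{D}_R)\cap\mathbb{D}_r$ lies in a tiny neighbourhood of $0$. It is typically a single disk on which $f_{n_k}$ is a branched cover of degree $\geq m$ (model: $z\mapsto z^m$). The preimage tree of the scalar sequence $\zeta_j$ then does not give $m^n$ points that are $(n,\epsilon)$-separated for a fixed $\epsilon$: branches merge at critical points and come arbitrarily close near critical values. Counting periodic points has the same defect, since it gives no separation.

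What is missing is an entropy bound driven by degree rather than by a horseshoe. The paper gets it from Dujardin's argument. On $\Delta=\mathbb{D}_r^N$, set $D_n=\bigcap_{k\le n}F^{-k\nu}(\overline{\Delta})$. Each $\pi_i\circ F^{\nu}$, $N-\nu+1\le i\le N$, is proper of degree $d\geq m$. Hence for a horizontal line $L$, $\text{vol}(F^{n\nu}(L\cap D_n))$ grows like $d^n$. Yomdin's theorem turns this volume growth into $h(F^{\nu})\geq\log d$; the quotient collapsing $\partial^{\nu}_+\Delta$ is used to pass from $D_n$ to the invariant set. Some degree or volume ingredient of this kind has to replace your component count; without it Case~1 does not go through.

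\textbf{The non-quasinormal case} is essentially the paper's Case~2. The paper iterates $F_{n_h}^{\nu}$ on ``holomorphic polydisks'' and only writes out $N=2\nu$; your one-step version with product boxes covers all $\nu$. The step ``iterating this construction and using nested compactness'' still needs an argument. You must show that the set of points following a given admissible word of length $n$ is nonempty, and this does not follow from the one-step statement, because the relations $\zeta_{t+N}=f_{n_h}(\zeta_{t+N-\nu+1})-a\zeta_{t+1}$ are coupled across different $t$. There are two ways to close it:
\begin{enumerate}
\item Propagate a graph structure as the paper does; Rouch\'e's theorem keeps $f_{n_h}-aw_l$ univalent.
\item Solve all relations simultaneously. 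Let $g_t$ be the univalent inverse branch provided by $i_{t+N}\in J(i_{t+N-\nu+1},i_{t+1})$. The map replacing $\zeta_{t+N-\nu+1}$ by $g_t(\zeta_{t+N}+a\zeta_{t+1})$, and leaving the other coordinates fixed, sends $\prod_{s}\overline{\mathbb{D}_r(x_{i_s})}$ into itself because $r(1+|a|)<R$. Brouwer's theorem then gives a fixed point, which is the required orbit segment.
\end{enumerate}
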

\begin{proof}
We shall divide our proof into two cases.\\
\textbf{Case 1:} When the sequence $\{f_n\}$ is quasi-normal. \\
Here, we shall first prove that if $f$ is a holomorphic  function defined in a neighbourhood of $\overline{\mathbb{D}_r}$ and $a\in\mathbb{C}^*$ is  such that $|f(z)|>(|a|+1)r$ whenever $|z|=r$, and also if the winding number of the curve $f(\partial\mathbb{D}_r)$ around the origin is $d\geq 1$, then the topological entropy of the map $F^{\nu}$, where $F$ is map as defined in \Cref{3}, is at least $\log d.$
To obtain this claim, we shall use a technique as used by Dujardin in \cite[Theorem 3.1]{Dujardin}. Consider the following notations.
\begin{align*}
\Delta&=\underbrace{\mathbb{D}_r\times\mathbb{D}_r\times\dots\times\mathbb{D}_r}_{\substack{N\text{-times}}}=\prod_{i=1}^N \mathbb{D}_r.\\
\partial^{\nu}_+\Delta &=\{(z_1,z_2,\dots,z_N)\in\mathbb{C}^N: |z_i|=r \text{ for some } N-\nu+1\leq i\leq 	N\}, \text{ and } \\
\partial^{N-\nu}_-\Delta&=\{(z_1,z_2,\dots,z_N)\in\mathbb{C}^N: |z_i|=r \text{ for some } 1\leq i\leq 	N-\nu\}.
\end{align*}

Take $K=\bigcap_{n\in\mathbb{N}}F^{-n\nu}\left(\overline{\Delta}\right)$. Let $c(n,\epsilon,K)$ be the minimal cardinality of $(n,\epsilon)-$spanning sets in $K$. For every $n\in\mathbb{N}$, denote the set $\bigcap_{k=1}^{n} F^{-k\nu}\left(\overline{\Delta}\right)$ by $D_n$. We first claim that 
\[\lim_{\epsilon\to 0}\limsup_{n\to\infty}\frac{1}{n}\log c(n,\epsilon,K)=\lim_{\epsilon\to 0}\limsup_{n\to\infty}\frac{1}{n}\log c(n,\epsilon,D_n).\]
Since $K\subseteq D_n$ for every $n\geq 1$, we have 
\[\lim_{\epsilon\to 0}\limsup_{n\to\infty}\frac{1}{n}\log c(n,\epsilon,K)\leq\lim_{\epsilon\to 0}\limsup_{n\to\infty}\frac{1}{n}\log c(n,\epsilon,D_n).\]
For the other inequality, we shall proceed as follows.\\
Define a relation $\sim$ on $\overline{\Delta}$ as: identify every point on $\partial^{\nu}_+\Delta$ with a point $(r,r,\dots,r)\in \overline{\Delta}$, and every other point of $\overline{\Delta}$ with itself. Therefore, we get a quotient map $\pi: \overline{\Delta}\to\overline{\Delta}/{\sim}$ defined by 
$\pi(z)=(r,r,\dots,r)$ for every $z\in \partial^{\nu}_+\Delta$, and $\pi(z)=z$ for every $z\not\in \partial^{\nu}_+\Delta.$ Now, we can induce a metric $\widetilde{d}$ on $\overline{\Delta}/{\sim}$ as follows.
\begin{align*}
\widetilde{d}(\pi(z_1,z_2,\dots,z_N), \pi(w_1,w_2,\dots,w_N))=&\min\{d((z_1,z_2,\dots,z_N), (w_1,w_2,\dots,w_N)), \\
&d((z_1,z_2,\dots,z_N), \partial^{\nu}_+\Delta), d((w_1,w_2,\dots,w_N), \partial^{\nu}_+\Delta)\},\\
\widetilde{d}(\pi(z_1,z_2,\dots,z_N), (r,r,\dots,r))=&\, d((z_1,z_2,\dots,z_N), \partial^{\nu}_+\Delta).
\end{align*}
Further, we define a map $\widetilde{F}:\overline{\Delta}/{\sim}\to\overline{\Delta}/{\sim} $ as $\widetilde{F}(\pi(z_1,z_2\dots,z_N))=\pi(F^{\nu}(z_1,z_2,\dots,z_N))$ if $F^{\nu}(z_1,z_2,\dots,z_N)\in\Delta$ and $\widetilde{F}(\pi(z_1,z_2,\dots,z_N))=(r,r,\dots,r)$ if $F^{\nu}(z_1,z_2,\dots,z_N)\not\in \Delta.$ 
The induced metric $\widetilde{d}$ on $\overline{\Delta}/{\sim}$ gives us that $\widetilde{F}$ is continuous. Also, by definition, we get $\widetilde{F}|_{\pi(K)}=F^{\nu}|_{K}.$ As $d$ and $\widetilde{d}$ coincide in some neighbourhood of $K$,  we have the topological entropies of $\widetilde{F}$ and $F^{\nu}$ are same on $\pi(K)$ and $K$, respectively. Further, as $\pi$ is an injective map in a neighbourhood of $K$, we get 
\begin{equation}\label{eq1}
h(F)=h(\widetilde{F})\geq \lim_{\epsilon\to 0}\limsup_{n\to\infty} \frac{1}{n}c(n,\epsilon,D_n).
\end{equation}
This, in particular gives us our claim. \\
Now, for every $N-\nu+1\leq i\leq N$, we have $\pi_i\circ F^{\nu}$ is a proper map of degree $d$. Therefore, for every horizontal line $L$ (in any direction parallel to the $j$-th coordinate, where $N-\nu+1\leq j\leq N$) we have
\begin{equation}\label{eq2}
\limsup_{n\to\infty} \frac{1}{n}\log \text{volume}(F^{n\nu}(L\cap D_n))=\log d.
\end{equation}
Lastly, by Yomdin's result \cite[Page no. 290-292]{Yomdin}, we get that
\[ \lim_{\epsilon\to 0}\limsup_{n\to\infty} \frac{1}{n}c(n,\epsilon,D_n)\geq \limsup_{n\to\infty} \frac{1}{n}\log \text{vol}(F^{n\nu}(L\cap D_n)). \]
Therefore, using \Cref{eq1} and \Cref{eq2}, we get  $h(F^{\nu})\geq \log d.$ \\
Now, from Lemma \ref{2.6}, we get for every $k>k_0$, $F_{n_k}^{\nu}$ satisfies all the above conditions.  Therefore, we get that the topological entropy of $F_{n_k}^{\nu}$ is at least $\log m$, and hence the same holds for $F^{\nu}$. Since $m$ is arbitrary, we get that the topological entropy of $F$ is infinite.

\noindent \textbf{Case 2:} When the sequence $\{f_n\}$ is not quasi-normal.\\
 In this case, we shall only prove the result for $\nu=N-\nu.$ The other cases follow in a similar manner.  Firstly, we introduce the following definition and then follow the steps given below.
\begin{definition}
Let $i_1,i_2,\dots, i_N\in\{1, 2, \dots,k\}$. We say that a domain $D$ is a $(i_1,i_2,\dots, i_N)$\\$-$holomorphic polydisk if it can be parametrized as follows.
\begin{align*}
D&=\{(w_{1}(z_{N-\nu+1}),w_2(z_{N-\nu+2}),\dots,w_{\nu}(z_N), z_{N-\nu+1}, z_{N-\nu+2},\dots, z_N)\in\mathbb{C}^N: z_{j}\in\mathbb{D}_r(x_{i_j})\\
& \text{ for every }N-\nu+1\leq j\leq N \text{ and } w_l \text{ is a holomorphic  function from }\mathbb{D}_r(x_{N-\nu+l}) \text{ to }\\
& \,\mathbb{D}_r(x_{i_l}) \text{ for every }1\leq l\leq \nu\}.
\end{align*}
\end{definition}
\noindent Step 1: 
If $i_1, i_2,\dots, i_N\in\{1,2,\dots,k\}$, then for every $j_1\in J(i_{N-\nu+1}, i_1), j_2\in J(i_{N-\nu+2}, i_2),$\\
$\dots,j_{\nu}\in J(i_{N}, i_{\nu})$ and for every $(i_1, i_2,\dots, i_N)-$holomorphic polydisk $D$, we observe that there exists a holomorphic polydisk $V\subseteq D$ such that $F_{n_h}^{\nu}(V)$ is a $(i_{N-\nu+1}, i_{N-\nu+2},\dots,i_N, j_1, $\\
$j_2,\dots, j_{\nu})-$holomorphic polydisk.\\
\vspace{0.2cm}
For this, firstly note that, by definition, for every $(z_1,z_2,\dots, z_{N})\in\mathbb{C}^N$ and for every $n_h\in\mathbb{N}$ (existence is guaranteed by Lemma \ref{3.1}), we have 
\\
\begin{align*}
F^{\nu}_{n_h}(z_1, z_2,\dots, z_N)=&(z_{\nu+1}, z_{\nu+2}, \dots, z_N, f_{n_h}(z_{N-\nu+1})-az_1, f_{n_h}(z_{N-\nu+2})-az_2,\dots, f_{n_h}(z_{N-1})\\
&-az_{\nu-1}, f_{n_h}(z_{N})-az_{\nu}).
\end{align*}
Therefore, if we take any point $(z_1, z_2,\dots,z_N)$ in a  $(i_1, i_2,\dots, i_N)-$holomorphic polydisk $D$, then the first $N-\nu$ components of its image under $F^{\nu}_{n_h}$ lies in $\mathbb{D}_r(x_{i_{\nu+1}}), \mathbb{D}_r(x_{i_{\nu+2}}), \dots,$\\
$\mathbb{D}_r(x_{i_{N}})$, respectively. For the remaining $\nu$ components, we shall make the following observation.
For every $1\leq l\leq \nu,$ we have $j_{l}\in J(i_{N-\nu+l}, i_l)$. Therefore, by Lemma \ref{3.1}, $\mathbb{D}_R(x_{j_{l}}+ax_{i_{l}})$ admits a biholomorphic preimage under $f_{n_h}$ in $\mathbb{D}_r(x_{i_{N-\nu+l}})$, that is, there exists a $W_l\subset \mathbb{D}_r(x_{i_{N-\nu+l}})$ such that $f_{n_h}-ax_{i_l}$ is a biholomorphism between $W_l$ and $\mathbb{D}_R(x_{j_l})$. We can further assume that (by shrinking $R$) $f_{n_h}-ax_{i_l}$ is a homeomorphism between $\overline{W_l}$ and  $\overline{\mathbb{D}_R(x_{j_l})}$. Now, for any $z_{N-\nu+l}\in\partial W_l$, we have 
\[|(f_{n_h}(z_{N-\nu+l})-aw_l(z_{N-\nu+l}))-(f_{n_h}(z_{N-\nu+l})-ax_{i_l})|=|a||w_l(z_{N-\nu+l})-x_{i_l}|<|a|r<R-r.\]
Since, $f_{n_h}-ax_{i_l}$ is a biholomorphism, by Rouch\'e's Theorem, we get that $f_{n_h}-aw_l$ is also an injection on $W_l$. Hence, for every $1\leq l\leq \nu$, if we set $U_l=(f_{n_h}-aw_l)^{-1}(\mathbb{D}_r(x_{j_l}))$, then we get $f_{n_h}-aw_l$ a biholomorphism between $U_l$ and $\mathbb{D}_r(x_{j_l})$. Hence, if we take $V=D\cap (\mathbb{C}\times\dots\times \mathbb{C}\times U_1\times U_2\times\dots\times U_{\nu})$, then we get $F^{\nu}_{n_h}(V)$ is a $(i_{N-\nu+1}, i_{N-\nu+2},\dots,i_N, j_1, j_2,\dots, j_{\nu})-$ holomorphic polydisk.\\
Step 2:  Define a set 
\[K=\bigcup_{1\leq i_1,i_2,\dots,i_N\leq k}\overline{\mathbb{D}_r(x_{i_1})}\times\overline{\mathbb{D}_r(x_{i_2})}\times\dots\times\overline{\mathbb{D}_r(x_{i_N})} \,\text{ and }\,L=\bigcap_{m\geq 0}F_{n_h}^{-m\nu}(K)\]
It can be easily seen that $K$ is compact and $L$ is forward $F_{n_h}^{\nu}-$invariant. Now, we associate every element of $L$ with a sequence in the set $\{1,2,\dots,k\}^{\mathbb{N}}$. For this, we say that a sequence $(l_n)_{n\geq 1}\in\{1,2,\dots,k\}^{\mathbb{N}}$ is admissible if $l_{N+m}=J(l_{N-\nu+m}, l_m)$ for every $m\geq 1$. A finite word is admissible if it is the start of an infinite admissible sequence. Now, for every admissible sequence $(l_n)_{n\geq 1}$, by Step 1, there exists a point $v\in L$ such that $F_{n_h}^{m\nu}(v)$ lies in a $(l_{m\nu+1}, l_{m\nu+2},\dots,l_{N+(m-1)\nu}, l_{N+{(m-1)\nu}+1},\dots, l_{N+{(m-1)\nu+\nu}})$ domain for every $m\geq 1$. Further, for every $ 1\leq m\leq N$, there are at least $k^N$ words of length $m$ and for $m>N$, there are at least $k^N(k-2)^{m-N}$ words of length $m$. Therefore, $L$ has at least $(k-2)^{m}$ elements with distinct symbolic representation, and these elements will be $(m,\epsilon)-$ separated if 
\[\epsilon<\min_{1\leq r,s\leq N}\text{dist}(\overline{\mathbb{D}}_r(x_{i_r}), \overline{\mathbb{D}}_r(x_{i_s}))\]
Therefore, on $L$, $F_{n_h}^{\nu}$ has topological entropy at least $\log (k-2)$. Since $k$ is arbitrary, we get $h(F_{n_h}^{\nu})=\infty$, and this implies that a transcendental shift-like map of type $\nu$ has infinite entropy.
\end{proof}

\section{Wandering domains: an example}
In this section, we shall provide a contrast between the dynamics of polynomial shift-like maps and their transcendental counterparts. In fact, we provide an example of transcendental shift-like map with an escaping wandering domain. Recall that a map $F$ defined on $\mathbb{C}^N$ has an escaping wandering domain $U$ if the orbit of every point of $U$ under $F$ escapes to infinity \cite{Arosio}. Note that, escaping wandering domains do not exist for a polynomial shift-like map due to the existence of filtrations \cite{Bedford}. Specifically, we prove the following. 
\begin{theorem}
There exists a transcendental shift-like map on $\mathbb{C}^3$ with escaping wandering domains.
\end{theorem}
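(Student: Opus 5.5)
We will mimic the construction of escaping wandering domains for transcendental H\'enon maps in \cite{Arosio}, where $F(z,w)=(f(z)-\delta w, z)$ is taken with $f(z)=z+e^{-z}+2\pi i$ (or a similar Fatou-type function). In dimension three we take a shift-like map of type $\nu=1$,
\[F(z_1,z_2,z_3)=(z_2,z_3,f(z_3)-az_1),\]
and we choose $f$ and $a$ so that the orbit structure reduces to a one-variable map with a known escaping wandering domain. Concretely, we take $a$ with $|a|$ small, or simply $a=1$, and $f(z)=z+e^{-z}+2\pi i$. The one-dimensional map $g(z)=z+e^{-z}+2\pi i$ has an escaping wandering domain: it is semiconjugate to Fatou's function $z+e^{-z}+1$ translated, and the half-plane $\{\operatorname{Re} z>c\}$ is mapped into itself, with orbits moving by roughly $2\pi i$ each step.

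The key computation is to find an explicit forward-invariant open set $U\subset\mathbb{C}^3$ on which the iterates escape to infinity while staying in distinct regions. We look for orbits with $z_{k+1}\approx z_k+\alpha$ for a constant shift $\alpha$. On the ansatz $\zeta_j=\zeta_0+j\alpha+\text{small}$, the recursion $\zeta_{j+3}=f(\zeta_{j+2})-a\zeta_j$ becomes $\zeta_0+(j+3)\alpha=\zeta_0+(j+2)\alpha+e^{-\zeta_{j+2}}+2\pi i-a(\zeta_0+j\alpha)$. This is consistent only if $a=0$, so we must modify $f$ to absorb the linear term. We therefore take
\[f(z)=(1+a)z+\beta+e^{-z},\]
with $a$ and $\beta$ chosen so that the linear part of $F$ has the invariant direction $(1,1,1)$ with translation. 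Then $F(z,z+\alpha,z+2\alpha)=(z+\alpha,z+2\alpha,(1+a)(z+2\alpha)+\beta+e^{-z-2\alpha}-az)$. Setting $\beta=3\alpha-2\alpha(1+a)=\alpha(1-2a)$ makes the last coordinate equal to $z+3\alpha+e^{-z-2\alpha}$. With $\alpha=2\pi i$, the term $e^{-z}$ is periodic along the orbit.

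Next we define the candidate domain $U=\{(z_1,z_2,z_3): \operatorname{Re} z_j>C,\ |z_{j+1}-z_j-2\pi i|<\eta\}$. We show that $F(U)\subset U'$, where $U'$ is $U$ with the real parts pushed right. This requires controlling the linearized error $e_j=\zeta_{j+1}-\zeta_j-\alpha$, which satisfies a linear recursion with companion matrix of characteristic polynomial $\lambda^3-(1+a)\lambda^2+a$, forced by terms of size $e^{-\operatorname{Re}\zeta}$. This polynomial has the root $\lambda=1$, and its other roots satisfy $\lambda^2-a\lambda-a=0$. We choose $a$ small (say $|a|<1/4$) so that these roots lie inside the unit disk. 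The remaining neutral mode is the real-part drift. Since $\operatorname{Re}(z+e^{-z})>\operatorname{Re} z$ only for $\operatorname{Im} z$ near $2\pi\mathbb{Z}$, we restrict imaginary parts mod $2\pi$ to a small strip, as in Fatou's example. Then $\operatorname{Re}\zeta_j$ increases by at least a definite amount summed over steps, giving $\operatorname{Re}\zeta_j\to\infty$ and hence escape.

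Finally, we prove that $U$ is contained in a Fatou component $\Omega$ and that $\Omega$ is wandering. Normality on $U$ follows because $F^n\to\infty$ uniformly on compacta of $U$. For wandering, we note that $F^n(U)$ has imaginary parts shifted by $2\pi i n$ while $\Omega$ stays within a controlled neighbourhood. Alternatively, we use that the function $z_3-z_2$ is nearly constant on the component, together with an argument via the periodicity $e^{-z}$ under $z\mapsto z+2\pi i$: the map commutes with translation by $(2\pi i,2\pi i,2\pi i)$ up to the $\beta$ term, so components are translated copies, and it suffices to exclude that $F^n(\Omega)=\Omega$. This last step is the main obstacle, since it needs a bound showing that the Fatou component does not extend across translates. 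For it we would follow the Lipschitz and Kobayashi-hyperbolicity argument of \cite{Arosio}, comparing with the one-variable Fatou component of $z+e^{-z}+\text{const}$.
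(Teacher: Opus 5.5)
\textbf{Gap: the wandering step is not proved.} This is the step you flag yourself and then defer, and it is the content of the theorem. Your map commutes exactly with $T\colon z\mapsto z+2\pi i(1,1,1)$; the constant $\beta$ plays no role. So $F=T\circ G$ with $G=T^{-1}\circ F$ commuting with $T$, and $T$ permutes the Fatou components of $F$.

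Escape and normality on $U$ only put $U$ in the Fatou set. What must be shown is that the component $\Omega\supset U$ satisfies $\Omega\neq T^k(\Omega)$ for $k\neq 0$, i.e.\ that $J(F)$ separates $\Omega$ from its translates. The set you first write down, $\{\operatorname{Re}z_j>C,\ |z_{j+1}-z_j-2\pi i|<\eta\}$, shows why this is the whole issue. It is connected and $T$-invariant, and $F$ maps its points with very large real part and differences exactly $2\pi i$ back into it. So if it lay in the Fatou set, its component would be $F$-invariant, not wandering. The mod-$2\pi$ strip condition is therefore essential. Everything hinges on showing that $J(F)$ contains a barrier (presumably near $\operatorname{Im}z_j\equiv\pi \pmod{2\pi}$) separating the components $U_0, T(U_0), T^2(U_0),\dots$ of the strip-restricted set, and the proposal never does this.

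Normality cannot help, since all translates escape to the same point $[1:1:1:0]$. The sentence ``$\Omega$ stays within a controlled neighbourhood'' is exactly the unproved claim. The Kobayashi/Lipschitz suggestion is circular as sketched: bounding from below the Kobayashi distance in $\Omega$ between $p$ and $T(p)$, or even knowing $\Omega$ is hyperbolic, needs an a priori bound on $\Omega$.

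\textbf{How the paper handles this, and why it does not transfer.} The paper makes a different choice of $f$ so that $F=T_{(1,1,1)}\circ G$ with $G$ having \emph{attracting} fixed points: the lattice points are critical points of $\widetilde f$, so for small $a$ the linear part there is contracting. It then takes the immediate basin $B$ of such a point for $G$ and shows $\partial B\subset J(F)$. Take a holomorphic disc through a boundary point with centre in $B$. Exponential contraction of $G^n$ near the centre, together with the boundary point's $G$-orbit staying a definite distance from the fixed point, gives via Cauchy estimates exponential growth of $\|dF^n\|$ somewhere in the disc. That growth is incompatible with $F^n\to[1:1:1:0]$ on a Fatou component.

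With your $f$ this tool is unavailable. A fixed point of $G$ would have to be of the form $(w,w+2\pi i,w+4\pi i)$, and the last coordinate then forces $e^{-w}=0$, so $G$ has no fixed points. The component containing $U_0$ is of parabolic (Baker) type for $G$: $G^n\to\infty$ there, real parts grow like $\log n$, and there is no exponential contraction. You would need either a new argument for $\partial\Omega\subset J(F)$, or a change of $f$ so that $G$ has an attracting fixed point.

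\textbf{Secondary points.}
\begin{itemize}
\item Forward invariance of $U$ is asserted but not checked. The transient part of $e_j$, of size $\eta$, competes with the forcing $e^{-\operatorname{Re}\zeta}$, so a more carefully designed absorbing set is needed.
\item The claim that $z\mapsto z+e^{-z}+2\pi i$ maps $\{\operatorname{Re}z>c\}$ into itself is false: on $\operatorname{Im}z=\pi$ the real part decreases.
\end{itemize}
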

\begin{proof}
To obtain a desirable map, we shall proceed with the following steps.\\
\noindent Step 1: Consider $f(z)=z+\sin(2\pi z)+1-\sqrt{1-\frac{1}{4\pi^2}}$. 
In \cite{Arosio}, it has been shown that the 
critical points of $f$ 
are $c_{n,+}=\alpha+n$ 
and $c_{n,-}=-\alpha+n$, 
for $n\in\mathbb{Z}$ and with $\alpha\in \left(\frac{1}{4},\frac{1}{2}\right)$ 
to be chosen such that $\sin(2\pi(\alpha+n))=\sqrt{1-\frac{1}{4\pi^2}}.$ 
We shall consider only $c_{n,+}, n\in\mathbb{Z}$. Also, note that 
\begin{align*}
f(c_{n,+})&=\alpha+n+\sin(2\pi(\alpha+n))+1-\sqrt{1-\frac{1}{4\pi^2}}\\
&=\alpha+n+\sqrt{1-\frac{1}{4\pi^2}}+1-\sqrt{1-\frac{1}{4\pi^2}}\\
&=\alpha+n+1\\
&=c_{n+1,+}.
\end{align*}
It means that $f$ acts as a translation on $c_{n,+},$ for $n\in\mathbb{Z}$.

\noindent Step 2: Consider $\widetilde{f}(z)=\phi\circ f\circ \phi^{-1}(z)$, 
where $\phi(z)=z-\alpha$. 
Therefore, $\widetilde{f}(z)=z+\sin (2\pi(z+\alpha))+1-\sqrt{1-\frac{1}{4\pi^2}}$. 
Now, let $z_n=n,$ for $n\in\mathbb{Z}$. 
We can easily observe that $\widetilde{f}(n)=n+1,$ 
for  $n\in\mathbb{Z}$, 
and each $z_n$ is a critical point for $\tilde{f}$.

\noindent Step 3: $\widetilde{f}$ commutes with translation by $1$, 
that is, $\widetilde{f}(z+1)=\widetilde{f}(z)+1,$ 
for every $z\in\mathbb{C}.$ 
Because, $\widetilde{f}(z+1)=z+1+\sin (2\pi(z+\alpha))+1-\sqrt{1-\frac{1}{4\pi^2}}=\widetilde{f}(z)+1$.

\noindent Step 4: Define the transcendental shift map of type $\nu=1$ as follows
\[F(z_1,z_2, z_3)=\left(z_2,z_3,\widetilde{f}(z_{3})+a(z_{3}-1)-a z_1+a\right),\]
where $\widetilde{f}$ is as constructed in Step 2, and $a\in(0,1)$. 

\noindent Step 5: $F$ commutes with translation by $(1,1,1)$, 
that is, $F(z_1+1,z_2+1, z_3+1)=F(z_1,z_2, z_3)+(1,1,1)$, 
for every $(z_1,z_2, z_3)\in\mathbb{C}^{3}$. 
For this, consider
\begin{align*}
F(z_1+1,z_2+1, z_3+1)& = (z_2+1,z_3+1,\widetilde{f}(z_3+1)+az_3-a(z_1+1)+a)\\
&= (z_2+1,z_3+1,\widetilde{f}(z_3)+1+a(z_3-1)-az_1+a)\\
& = (z_2,z_3,\widetilde{f}(z_3)+a(z_3-1)-az_1+a)+(1,1,1)\\
& = F(z_1,z_2,z_3)+(1,1,1).
\end{align*}
Also, observe that 
for every $n\in\mathbb{Z},$
\[ F(n-1,n,n+1)=(n,n+1,n+2).\] That is, $F$ acts 
as a translation by $(1,1,\dots,1)$ 
on $q_n=(n-1,n, n+2),$ 
for every $n\in\mathbb{Z}$. Now, define a map $G$ as $G(z_1,z_2,z_3)=F(z_1,z_2,z_3)-(1,1,1)$. Therefore, if we denote translation map on $\mathbb{C}^3$ by $T_{(1,1,1)}$, then $F=G\circ T_{(1,1,1)}$. Also, we get that each $q_n$ is an attracting fixed point of $G$ since $a\in (0,1)$. 
Let $B_n$ be an immediate basin of attraction for $G$ corresponding 
to $q_n,$ for $ n\in\mathbb{Z}.$
It can be easily seen  
that $F(B_n) = B_{n+1},$ for $ n\in\mathbb{Z}.$ 
We shall show that 
each $B_n$ is also a Fatou component for $F$. If it is true, then we get $B_1$ is an escaping wandering domain for $F$.
To establish that, 
we shall follow the approach used by Arosio et al. in \cite[Theorem 6.3]{Arosio}. In fact, the following result is a generalization of Theorem $6.3$ proved in \cite{Arosio}.
To do so, we shall implement the following approach.\\
Let $U$ be an Fatou component of $G$ corresponding to an attracting fixed point $p$. We shall prove that  $U$ is also a Fatou component of $F$.  
 We first show that $U\subseteq V,$ where $V$ is 
a Fatou component of $F$. Then, we claim 
that every boundary point of $U$ lies in the 
Julia set of $F$, which in particular shows that $U=V$.

Firstly, observe that as $p$ is a fixed point 
for $G$, we have $\{T^n_{(1,1,1)}(p): n\in\mathbb{Z}_+\}$ 
is an $F$-orbit of $p$ and it converges to the 
point $[1:1:1:0]$ on the line at infinity. 
Since, $F$ commutes with a translation 
by $(1,1,1),$ we have $F^{ n}=T^n\circ G^n,$ for $ n\in\mathbb{N}$. It means that the $F-$orbit 
of every point in $U$ converges to $[1:1:1:0]$. Hence $U\subseteq V,$ where $V$ is a Fatou component of $F$.

Now, for simplicity, assume that $p=(0,0,0)$. 
For $q\in\partial U$, there exists a 
neighbourhood $U_1$ of the origin such that orbit of the  point $q$ 
under $G$ does not intersect $U_1$, that is, 
there exists $\mu_1\in(0,1)$ such that
\[\Vert G^n(q)\Vert>\sum_{k=0}^{\infty}\mu_1^k \,\text{ for every }n\in\mathbb{N}.\]
Further, if $\pi_n$ denotes the projection of $G^n(q)$ onto a complex line through origin, then we have 
\[ |\pi_n(G^n(q))|>\sum_{k=0}^{\infty}\mu_1^k \,\text{ for every }n\in\mathbb{N}.\]
Now, consider $\phi:\mathbb{C}\to\mathbb{C}^3$, a complex embedding such that $q\in\phi(\mathbb{D})$ and $\phi(0)\in U$. Also, define a sequence of entire functions, $\phi_n:\mathbb{C}\to\mathbb{C}$ as $\phi_n(z)=\pi_n\circ G^n\circ\phi(z)$. For this sequence, we first make the following observations.
\begin{enumerate}
\item Let $r>0$ be such that $\phi(D(0,r))$ be compactly contained in $U$. Since $U$ is an attracting Fatou component for $G$, we get the existence of  $C_1>0$ and $\lambda_1\in(0,1)$ such that 
\[\Vert\phi_n\Vert_{D(0,r)}<C_1\lambda_1^n.\]
Further, if we instead define
\[\phi_n = \pi_n \circ G^{n+m}\circ\phi\]
for some large integer $m\in\mathbb{N}$ we may assume that
$\Vert\phi_n\Vert_{D(0,r)}<\lambda_1^n.$
Now, by defining  $\xi_n(z)= \phi_n(r · z)$, we obtain a sequence of maps $\xi_n:\mathbb{D}\to\mathbb{C}$. Therefore, $|\xi_n(z)|<\lambda_1^n$. For this sequence of functions, we first claim that if $\beta\in(0,1)$, then $|\xi_n^{(k)}(0)|<k!\beta^k$ for $k<\frac{n}{\log_{\lambda_1}(\beta)}$.
Using Cauchy estimates, we have $|\xi_n^{(k)}(0)|<k!\lambda_1^n<k!\beta^k$ for $k<\frac{n}{\log_{\lambda_1}(\beta)}$ (by the given condition on $k$, we have $\lambda_1^n<\beta^k$). 

Now, we choose $\beta=\mu_1r$. Note that, we need $\beta\in (0,1)$. For this, we modify our $\mu_1$ accordingly. Using the above claim, we have \[|\xi_n^{(k)}(0)|<k!(\mu_1 r)^k \text{ for } k<\frac{n}{\log_{\lambda_1}(\mu_1 r)}.\]
This further implies that 
\[|\phi_n^{(k)}(0)|<k!\mu_1^k \text{ for } k<\frac{n}{\log_{\lambda_1}(\mu_1 r)}.\]
\item By the choice of $\phi$, we have $q\in\phi(\mathbb{D})$. Let $\zeta=\phi^{-1}(q)\in\mathbb{D}$. Using the definition of $\phi_n$, we have $\displaystyle |\phi_n(\zeta)|\geq \sum_{k=0}^{\infty}\mu_1^k$ for every $n\in\mathbb{N}.$ 
We have the following as an immediate consequence. If $\displaystyle \phi_n(z)=\sum_{k=0}^{\infty}a_k z^k$ is a power series expansion of $\phi_n$ around the origin, then 
there exists at least one $k\in\mathbb{N}$, say $k_0$, such that $|a_{k_0}|\geq \mu_1^{k_0}$.\\
Combining this with observation $(1)$, we have 
\[|\phi_n^{(k_0)}(0)|\geq k!\mu_1^{k_0} \text{ for }k_0\geq \frac{n}{\log_{\lambda_1}(\mu_1 r)}.\]
\item Let $\eta>\frac{1}{\mu_1}$. By using the maximum principle for $\phi_n^{'}$, we have the existence of $w_1\in \overline{D(0,\eta)}$ such that $\displaystyle |\phi_n^{'}(w_1)|=\max_{z\in D(0,\eta)}|\phi_n^{'}(w)|$. Further, using Cauchy estimate, we have 
\begin{align*}
k!\mu_1^{k_0}&\leq|\phi_n^{(k_0)}(0)|\\
&=|(\phi_n^{'})^{(k_0-1)}(0)|\\
&<\frac{(k_0-1)!|\phi_n^{'}(w_1)|}{\eta^{k_0-1}}.\\
\end{align*}
This implies that $|\phi_n^{'}(w_1)|>\frac{k_0(\mu_1\eta)^{k_0}}{\eta}\geq C_2\Lambda^n$, where $\Lambda=(\mu_1\eta)^{\frac{1}{\log_{\lambda_1}(\mu_1 r)}}$ and $C_2>0$ is a constant. Note that, here we have used the inequality $k_0\geq \frac{n}{\log_{\lambda_1}(\mu_1 r)} $ to obtain the mentioned value of $\Lambda.$
\item Let $p_1=\phi(w_1)=(z_0,w_0,t_0)$. Since $\Vert\pi_n\Vert=1$, we have 
\[\Vert d_{p_1}(G^n)\Vert\geq C_3\Lambda^n \text{ for some constant }C_3>0. \]
This further implies that 
\[\Vert d_{p_1}(F^{ n})\Vert\geq C_3\Lambda^n.\]
\item Finally, we prove that there exists a point in $\phi(D(0,\eta))$ which is contained in the Julia set of $F$. For this, suppose that $\phi(D(0,\eta))$ is contained in the Fatou component $V$ of $F$. Then, we have $F^{ n}$ converges uniformly to $[1:1:1:0]$ in a neighbourhood $U_2$ of $p_1$.

Now, on $\{z_3\not=0\}$, we have an affine chart of $\mathbb{P}^3$ which is defined as\,-- \\
$\psi\left([z_1:z_2:z_3:t]\right)=\left(\frac{z_1}{z_3},\frac{z_2}{z_3},\frac{t}{z_3}\right)$.
Define $H:\{z_{3}\not= 0\}\cap\{t\not=0\}\to\mathbb{C}^3$ as 
\[H(z_1,z_2,z_3)=\left(\frac{z_1}{z_3}, \frac{z_2}{z_3},\frac{1}{z_3}\right).\]
For sufficiently large $n$, the sequence $H\circ F^n: U_1\to\mathbb{C}^3$ is defined and converges to $(1,1,0)\in\mathbb{C}^3$. Therefore, we have
\begin{equation}\label{9}
d_{(z_n,w_n,t_n)}H\circ d_{((z_0,w_0,t_0)=p_1)}F^n=d_{(z_0,w_0,t_0)}(H\circ F^n)\to 0.
\end{equation}
Note that $(z_n,w_n,t_n)=F^n(z_0,w_0,t_0)$, for $n\in\mathbb{N}$. Also, we have 
\[\Vert(d_{(z_n,w_n,t_n)}H)^{-1}\Vert=1.\]
Finally, 
\[\Vert d_{(z_n,w_n,t_n)}H\circ d_{(z_0,w_0,t_0)}F^n\Vert\geq \frac{\Vert d_{p_1}(F^n)\Vert}{\Vert(d_{(z_n,w_n,t_n)}H)^{-1}\Vert}\geq C_3\Lambda^n\to\infty, \text{ as }n \to\infty,\]
\end{enumerate}
which is a contradiction to \Cref{9}. This completes the proof.
\end{proof}

\section*{Acknowledgements}\vspace{-1mm}
The research of the author was supported by the Institute Post-doctoral Fellowship program at the Institute of Mathematics and Applications, Bhubaneswar, India.

\end{document}